\newcommand{\C}{\ensuremath{\mathbb{C}}}
\newcommand{\p}{\partial}
\newcommand{\Z}{\ensuremath{\mathbb{Z}}}
\newtheorem{lemma}{Lemma}
\newtheorem{proposition}{Proposition}
\newtheorem{theorem}{Theorem}
\newtheorem{corollary}{Corollary}
\begin{document}

\title[The phase form of a deformation quantization]
{On the phase form of a deformation quantization with separation of variables}
\author[Alexander Karabegov]{Alexander Karabegov}
\address[Alexander Karabegov]{Department of Mathematics, Abilene
Christian University, ACU Box 28012, Abilene, TX 79699-8012}
\email{axk02d@acu.edu}

\begin{abstract}
Given a star product with separation of variables on a pseudo-K\"ahler manifold, we obtain a new formal (1,1)-form from its classifying form and call it the phase form of the star product. The cohomology class of a star product with separation of variables equals the class of its phase form. We show that the phase forms can be arbitrary and they bijectively parametrize the star products with separation of variables. We also describe the action of a change of the formal parameter on  a star product with separation of variables, its formal Berezin transform, classifying form, phase form, and canonical trace density.
\end{abstract}
\subjclass[2010]{53D55}
\keywords{Deformation quantization with separation of variables; Pseudo-K\"ahler manifold; Classifying form; Canonical trace density; Formal Berezin transform}
\date{March 16, 2015}
\maketitle

\section{Introduction}

Given a manifold $M,$ we denote by $C^\infty(M)[\nu^{-1},\nu]]$ the space of formal Laurent series with a finite principal part,
\[
     f = \sum_{r = k}^\infty \nu^r f_r,
\]
where $f_r \in C^\infty(M)$ and $k \in \Z$. If $k > 0$, we will say that the formal function $f$ vanishes at $\nu=0$.

Deformation quantization on  a Poisson manifold $(M, \{\cdot,\cdot\})$  is an associative product (named a star product) $\ast$ on $C^\infty(M)[\nu^{-1},\nu]]$ given by the following $\nu$-adically convergent series,
\[
     f \ast g = fg + \sum_{r = 1}^\infty \nu^r C_r(f,g),
\]
where $C_r$ are bidifferential operators and $C_1(f,g) - C_1(g,f) = i\{f,g\}$. We assume that the unit constant is the unity for the star product, $f \ast 1 = 1 \ast f = f$.  A star product can be restricted (localized) to an open subset $U \subset M$. We denote by $L_f$ and $R_f$ the operators of left and right star multiplication by a function $f$, respectively,  so that $f \ast g = L_f g = R_g f$. The operators $L_f$ and $R_g$ commute for any functions $f, g$.

Nondegenerate Poisson bivectors bijectively correspond to symplectic forms. We will call a star product nondegenerate if the corresponding Poisson structure is nondegenerate.

Two star products $\ast$ and $\ast'$ on a Poisson manifold $M$ are called equivalent if there exists a formal differential operator $B = 1 + \nu B_1 + \ldots$ on $M$ such that
\[
                               f \ast' g = B^{-1}(Bf \ast Bg).
\] 
Deformation quantization was introduced in \cite{BFFLS}. It was proved by a number of authors (\cite{D},\cite{F2},\cite{NT2}) that the equivalence classes of star products on a symplectic manifold $(M, \omega_{-1})$ are bijectively parametrized by the formal cohomology classes in
\[
      \frac{1}{\nu}\left[\omega_{-1}\right] + H^2(M)[[\nu]].
\]
Fedosov gave a simple geometric construction of star products in each equivalence class on an arbitrary symplectic manifold in \cite{F1}. 
Kontsevich proved in \cite{K} that star products exist on arbitrary Poisson manifolds and gave a classification of star products up to equivalence in terms of formal deformations of the Poisson structure.

If $\ast$ is a star product on a $2m$-dimensional symplectic manifold $(M, \omega_{-1})$, there exists a canonically normalized formal trace density
\begin{equation}\label{E:mu}
          \mu = \frac{1}{m!}\left(\frac{1}{\nu}\, \omega_{-1}\right)^m e^{\varkappa},   
\end{equation}
where $\varkappa$  is a globally defined formal function on~ $M$ which vanishes at $\nu =0$ (see \cite{F1}, \cite{LMP2}). On a compact $M$, the index theorem for deformation quantization (\cite{F2},\cite{NT}) gives a topological formula for the
total volume of $\mu$ in terms of the cohomology class of the star product.  

If $M$ is a complex manifold with a  Poisson bracket $\{\cdot,\cdot\}$ of type $(1,1)$ with respect to the complex structure, a star product $\ast$ on $(M, \{\cdot,\cdot\})$ has the property of separation of variables (of the anti-Wick type) if $a \ast f = af$ and $f \ast b = bf$ for any locally defined holomorphic function $a$ and antiholomorphic function $b$, i.e., the operators $L_a = a$ and $R_b = b$ are pointwise multiplication operators. Equivalently, every bidifferential operator $C_r$ of the star product $\ast$ differentiates its first argument in antiholomorphic directions and the second argument in holomorphic ones.

On a coordinate chart on $M$, the Poisson bracket is given by a Poisson tensor $g^{\bar lk}$ of type (1,1),
\[
      \{f,g\} = i g^{\bar lk}\left(\frac{\p f}{\p z^k}\frac{\p g}{\p \bar z^l} - \frac{\p g}{\p z^k}\frac{\p f}{\p \bar z^l}\right).
\]
 For any star product with separation of variables on $(M,\{\cdot,\cdot\})$ the operator $C_1$ is given locally by the formula
\[
               C_1(f,g) = g^{\bar lk} \frac{\p f}{\p \bar z^l} \frac{\p g}{\p z^k}.
\]
Given a star product with separation of variables $\ast$ on $M$, there exists a formal differential operator $I = 1 + \nu I_1 + \nu^2 I_2 + \ldots$ globally defined on $M$ such that for a local holomorphic function $a$ and an antiholomorphic function $b$,
\[
                     I(ab) = b \ast a.
\]
In particular, $Ia = a$ and $Ib = b$ and therefore $I_ra =0$ and $I_rb =0$ for $r \geq 1$.  The operator $I_1$ is the Laplace-Beltrami operator $\Delta$ given by the local formula 
\[
                 \Delta = g^{\bar lk}\frac{\p^2}{\p z^k \p \bar z^l}.
\]

If $\lambda$ is a closed global (1,1)-form on $M$, its local potential $\varphi$ on an open subset $U \subset M$ is defined up to a summand $a + b$ and therefore the function $I_r \varphi$ does not depend on the choice of the potential. Such functions on a contractible covering glue to a global function on $M$ which depends only on the form $\lambda$ and the operator $I_r$. This observation will be used in the proof of Theorem \ref{T:existuniq}.

A star product with separation of variables $\ast$ on a complex manifold $M$ is completely determined by its formal Berezin transform $I$. It was proved in \cite{CMP3} that the inverse operator $I^{-1}$ is the formal Berezin transform of a star product with separation of variables $\tilde\ast$ on $M$ equipped with the opposite Poisson structure such that
\[
                      f \tilde\ast g = I^{-1}(Ig \ast If).
\]
We call $\tilde\ast$ the dual of the star product $\ast$. The dual of $\tilde\ast$ is $\ast$.

Given a complex manifold $M$ which admits a pseudo-K\"ahler structure, we denote by $\Omega(M)$ the set of formal series
\[
    \omega = \frac{1}{\nu}\, \omega_{-1} + \omega_0 + \nu \omega_1 + \ldots,
\]
where $\omega_r, r \geq -1$, are closed (1,1)-forms on $M$ and $\omega_{-1}$ is nondegenerate. In particular, $(M, \omega_{-1})$ is a pseudo-K\"ahler manifold which has a Poisson structure corresponding to $\omega_{-1}$. It was proved in \cite{CMP1} that the nondegenerate star products with separation of variables on $M$ are bijectively parametrized by the elements of $\Omega(M)$. In \cite{BW} Fedosov's geometric construction was modified in order to show that on every pseudo-K\"ahler manifold there exists a star product with separation of variables (of the Wick type). In \cite{N2} it was shown that every star product with separation of variables on a pseudo-K\"ahler manifold can be obtained via a generalized Fedosov's construction.

The form $\omega$ parametrizing a nondegenerate star product with separation of variables ~$\ast$ is called its classifying form. The classifying form $\tilde\omega$ of the dual star product $\tilde\ast$ is such that
\[
    \tilde\omega = -\frac{1}{\nu}\, \omega_{-1} + \tilde\omega_0 + \nu \tilde\omega_1 + \ldots.
\]
The mapping $\omega \mapsto \tilde\omega$ is an involution on $\Omega(M)$.
We call the form
\begin{equation}\label{E:phase}
      \omega^{ph} = \frac{1}{2}\left(\omega - \tilde\omega\right) = \frac{1}{\nu}\, \omega_{-1} + \omega^{ph}_0 + \nu \omega^{ph}_1 + \ldots
\end{equation}
the phase form of the star product $\ast$. Clearly, the phase form of the dual star product $\tilde\ast$ is $-\omega^{ph}$. Given a form $\omega \in \Omega(M)$, we will call the corresponding forms $\tilde\omega$ and $\omega^{ph}$ its dual and phase forms, respectively.

In this paper we will prove that the mapping $\omega \mapsto \omega^{ph}$ is a bijection of $\Omega(M)$ onto itself and thus the phase forms can be used as an alternative parametrization of the nondegenerate star products with separation of variables on $M$. 
This choice of parametrization is justified by the fact that the cohomology class of a star product with separation of variables is equal to the cohomology class of its phase form, which follows from results obtained in ~\cite{LMP1}.

We will consider the action of a change of the formal parameter on a star product with separation of variables, its formal Berezin transform, classifying form, phase form, and canonical trace density. In particular, we will consider the action of proper involutions of the formal parameter such as $\nu \mapsto -\nu$ and describe the star products whose phase form is odd with respect to an involution.

\section{Deformation quantizations with separation of variables}

In this section we will describe basic constructions related to star products with separation of variables on a pseudo-K\"ahler manifold obtained in \cite{CMP1}, \cite{LMP1}, and \cite{LMP2}. Fix a pseudo-K\"ahler manifold $M$ with a pseudo-K\"ahler form $\omega_{-1}$ and an element
\[
    \omega = \frac{1}{\nu}\, \omega_{-1} + \omega_0 + \nu \omega_1 + \ldots
\]
of $\Omega(M)$. Let $U$ be a contractible coordinate chart on $M$. Each form $\omega_r, r \geq -1,$ has a local potential $\Phi_r$ on $U$ so that $\omega_r = i\p\bar\p \Phi_r$. Thus,
\[
                 \Phi = \frac{1}{\nu}\, \Phi_{-1} + \Phi_0 + \nu \Phi_1 + \ldots
\]
is a formal potential of $\omega$. The metric tensor $g_{k\bar l}$ of the form $\omega_{-1}$ is given on $U$ by the formula
\[
                   g_{k\bar l} = \frac{\p^2 \Phi_{-1}}{\p z^k \p \bar z^l}.
\]
Its inverse $g^{\bar lk}$ is a Poisson tensor of type (1,1). It was proved in \cite{CMP1} that there exists a unique globally defined star product with separation of variables $\ast_\omega$ on $(M, \omega_{-1})$ such that on each contractible coordinate chart $U$,
\[
                 L_{\frac{\p \Phi}{\p z^k}} = \frac{\p \Phi}{\p z^k} + \frac{\p}{\p z^k} \mbox{ and }  R_{\frac{\p \Phi}{\p \bar z^l}} = \frac{\p \Phi}{\p \bar z^l} + \frac{\p}{\p \bar z^l}.
\]
The mapping $\omega \mapsto \ast_\omega$ is a bijection of $\Omega(M)$ onto the set of all nondegenerate star products with separation of variables on $M$. The formal form $\omega$ is called the classifying form of the star product $\ast_\omega$. We drop the subscript $\omega$ in $\ast_\omega$ if it does not lead to confusion.

Let $(M,\omega_{-1})$ be a pseudo-K\"ahler manifold of complex dimension ~$m$ and $U \subset M$ be a contractible coordinate chart. Denote $\mathbf{g} = \det (g_{k\bar l})$ and fix a branch of $\log \mathbf{g}$ on~ $U$. The Ricci form of the metric $g_{k\bar l}$ is a closed global (1,1)-form on $M$ given locally by the formula $\rho = - i\p\bar\p \log\mathbf{g}$. We denote by $\varepsilon = [-\rho]$ the canonical class of $M$.

A local construction of the canonical trace density $\mu$ of a star product with separation of variables $\ast$ on $M$ was introduced in ~\cite{LMP2}.
Below we give a slightly modified version of this construction. Fix  an arbitrary formal potential
\begin{equation}\label{E:phi}
\Phi = \frac{1}{\nu}\, \Phi_{-1} + \Phi_0 + \nu \Phi_1 + \ldots
\end{equation}
of the classifying form $\omega$ of the product $\ast$ on $U$. There exists a unique potential $\Psi$ of the dual form $\tilde\omega$ of the form
\begin{equation}\label{E:psi}
     \Psi = - \frac{1}{\nu}\, \Phi_{-1} + (-\Phi_0 + \log \mathbf{g}) + \nu \Psi_1 + \ldots
\end{equation}
satisfying the equation
\begin{equation}\label{E:dnuphi}
       \frac{d\Phi}{d\nu} + I\frac{d\Psi}{d\nu} = \frac{m}{\nu},
\end{equation}
where $I$ is the formal Berezin transform for the product~ $\ast$. The global function $\varkappa$ from (\ref{E:mu}) is given by the formula
\begin{equation}\label{E:kappa}
      \varkappa = \Phi + \Psi - \log \mathbf{g} = \nu(\Phi_1 + \Psi_1) + \nu^2(\Phi_2 + \Psi_2) + \ldots
\end{equation}
on $U$. It follows from (\ref{E:kappa}) that
\begin{equation}\label{E:omegatildeomega}
             \omega + \tilde\omega = - \rho + i\p \bar\p \varkappa.
\end{equation}
Since $\varkappa$ is global, the class of $i\p \bar\p \varkappa$ is trivial and
\[
          [\omega] + [\tilde\omega] = \varepsilon. 
\]
It was proved in \cite{LMP1} that the class of the star product with separation of variables with the classifying form $\omega$ is $[\omega] - \varepsilon/2$, which is exactly the class of the corresponding phase form $\omega^{ph}$.

\section{A star product with a given phase form}

In this section we will prove the existence and uniqueness of a nondegenerate star product with separation of variables on a pseudo-K\"ahler manifold $M$ whose phase form is a given arbitrary element of $\Omega(M)$.

Let $M$ be a pseudo-K\"ahler manifold of complex dimension~ $m$ and~ $\ast$ be a nondegenerate star product with separation of variables on ~$M$ with the classifying form
\[
        \omega = \frac{1}{\nu}\, \omega_{-1} + \omega_0 + \nu \omega_1 + \ldots.
\]
\begin{lemma}\label{L:conomega}
   For each $r \geq 1$, the bidifferential operator $C_r$ of the star product $\ast$ depends only on the forms $\omega_k$ with $-1 \leq k \leq r-2$.
\end{lemma}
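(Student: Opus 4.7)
The plan is to proceed by induction on $r$. The base case $r=1$ is immediate: the formula $C_1(f,g) = g^{\bar l k}\partial_{\bar l} f \partial_k g$ shows that $C_1$ depends only on the K\"ahler metric of $\omega_{-1}$, which matches the desired range $-1 \leq k \leq r-2=-1$. For the inductive step, assuming the claim for $C_1, \ldots, C_{r-1}$, the main tool will be the defining relation $L_{\partial_k \Phi} = \partial_k \Phi + \partial_k$ from Karabegov's theorem on a contractible chart $U$ with formal potential $\Phi = \sum_{s \geq -1}\nu^s \Phi_s$. Expanding both sides as formal Laurent series in $\nu$ and matching the coefficient of $\nu^{r-1}$ (for $r \geq 2$) yields the recursion
\[
C_r(\partial_k \Phi_{-1}, g) = -\sum_{t=1}^{r-1} C_t(\partial_k \Phi_{r-1-t}, g).
\]

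The right-hand side involves only $C_t$ for $t \leq r-1$ and $\Phi_s$ for $0 \leq s \leq r-2$. By the inductive hypothesis, each $C_t$ depends only on $\omega_{-1}, \ldots, \omega_{t-2}$; each $\Phi_s$ is a local potential of $\omega_s$, and the holomorphic/antiholomorphic ambiguity in the choice of $\Phi_s$ is killed in the expressions $C_t(\partial_k \Phi_s, g)$ because $C_t$ differentiates its first argument only antiholomorphically. Hence $C_r(\partial_k \Phi_{-1}, g)$ depends only on $\omega_{-1}, \ldots, \omega_{r-2}$. The symmetric argument applied to $R_{\bar\partial_l \Phi} = \bar\partial_l \Phi + \bar\partial_l$ shows that $C_r(f, \bar\partial_l \Phi_{-1})$ depends only on $\omega_{-1}, \ldots, \omega_{r-2}$.

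To upgrade these partial determinations to a statement about the full bidifferential operator $C_r$, one compares two classifying forms $\omega, \omega'$ with $\omega_s = \omega'_s$ for all $s \leq r-2$, chooses local potentials with $\Phi_s = \Phi'_s$ for $s \leq r-2$, and applies the recursions to conclude $(C_r - C'_r)(\partial_k \Phi_{-1}, g) = 0$ for all $k,g$ and $(C_r - C'_r)(f, \bar\partial_l \Phi_{-1}) = 0$ for all $f,l$. The equality $C_r = C'_r$ will then follow from the uniqueness part of Karabegov's theorem from \cite{CMP1}, whose inductive construction recovers $C_r$ in its entirety from exactly these privileged arguments together with the previously determined $C_1, \ldots, C_{r-1}$.

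The main obstacle is this closure step: promoting agreement on the specific arguments $\partial_k\Phi_{-1}$ and $\bar\partial_l\Phi_{-1}$ to agreement everywhere. The underlying reason this works is that, modulo $\nu^{r+1}$, the operator $L_{\partial_k \Phi_{-1}}$ is now completely known, and together with the trivially known $L_a$ for holomorphic $a$ it generates, through the commutation relations $[L_f, R_g]=0$ and the explicit form of $R_{\bar\partial_l \Phi}$, enough left-multiplication data to reconstruct $\ast$ through order $\nu^r$; the details of this reconstruction are exactly those present in the uniqueness argument of \cite{CMP1}, so no essentially new idea is needed beyond a careful bookkeeping of that construction.
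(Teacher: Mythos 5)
Your setup (induction on $r$, working on a chart with a formal potential $\Phi$, using the defining relations $L_{\p\Phi/\p z^k}=\p\Phi/\p z^k+\p/\p z^k$, and noting that the holomorphic/antiholomorphic ambiguity of the potentials is irrelevant) is the right skeleton, and your recursion $C_r(\p_k\Phi_{-1},g)=-\sum_{t=1}^{r-1}C_t(\p_k\Phi_{r-1-t},g)$ is correct. But the closure step is a genuine gap, and it is exactly where the content of the lemma lies. Knowing the values $C_r(\p_k\Phi_{-1},\cdot)$ and $C_r(\cdot,\p\Phi_{-1}/\p\bar z^l)$ together with $C_1,\dots,C_{r-1}$ does \emph{not} formally determine $C_r$: for instance, on $\C$ with $\Phi_{-1}=|z|^2$ the bidifferential operator $D(f,g)=\bigl(\p_{\bar z}^2 f\bigr)\bigl(\p_z^2 g\bigr)$ is of separation-of-variables type, kills $1$ in each slot, and satisfies $D(\p_z\Phi_{-1},g)=D(\bar z,g)=0$ and $D(f,\p_{\bar z}\Phi_{-1})=D(f,z)=0$, yet $D\neq 0$. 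So agreement of two candidate $C_r$'s on your privileged arguments does not force them to coincide. Ruling out such a difference would require invoking associativity (a Hochschild-cohomology argument showing the first-order discrepancy between two such products is a bivector), which you never use; and your appeal to ``the uniqueness part of Karabegov's theorem'' misstates what that uniqueness gives: it does not reconstruct $C_r$ from the values $C_r(\p_k\Phi_{-1},\cdot)$, $C_r(\cdot,\p\Phi_{-1}/\p\bar z^l)$.

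What the uniqueness argument of \cite{CMP1} actually provides, and what the paper uses, is an operator-level statement for an \emph{arbitrary} fixed $f$: the operator $A_r$ with $A_rg=C_r(f,g)$ contains only holomorphic derivatives, satisfies $A_r1=0$, and is uniquely determined by the full commutators $[A_r,\p\Phi_{-1}/\p\bar z^l]$ (as operators, i.e.\ by $C_r(f,ug)-uC_r(f,g)$ for $u=\p\Phi_{-1}/\p\bar z^l$ and all $g$ --- strictly more data than the single value $C_r(f,u)=[A_r,u]1$), because $\p^2\Phi_{-1}/\p z^k\p\bar z^l=g_{k\bar l}$ is nondegenerate. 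The commutation relation $[L_f,R_{\p\Phi/\p\bar z^l}]=0$, extracted at order $\nu^{r-1}$, expresses these commutators through $A_1,\dots,A_{r-1}$ and $\Phi_0,\dots,\Phi_{r-2}$, which closes the induction with no residual step. To repair your proof, replace the scalar recursion obtained by evaluating at the privileged arguments with this recursion for the full commutators; the rest of your argument then goes through unchanged.
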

\begin{proof}
   Let $U \subset M$ be a contractible coordinate chart and $\Phi = \frac{1}{\nu}\Phi_{-1} + \Phi_0 + \ldots$ be a potential of $\omega$ on $U$. Fix $f \in C^\infty(U)$. It was proved in \cite{CMP1} that the formal differential operator $A = L_f$ on $U$ is completely determined by the following conditions: the operator $A$ does not contain antiholomorphic derivatives, it commutes with the operators $R_{\p\Phi/\p \bar z^l}$ for $1 \leq l \leq m,$ and satisfies $A1 = f$. In particular, $A = f + \nu A_1 + \nu^2 A_2 + \ldots$ and $A_r1 =0$ for all $r \geq 1$. For a function $g \in C^\infty(U), A_rg = C_r(f,g)$. The commutation condition written explicitly is as follows,
\begin{equation}\label{E:commut}
      \left[f + \nu A_1 + \ldots, \frac{1}{\nu}\frac{\p \Phi_{-1}}{\p \bar z^l} + \left(\frac{\p \Phi_0}{\p \bar z^l} + \frac{\p}{\p \bar z^l}\right) + \nu \frac{\p \Phi_1}{\p \bar z^l} + \ldots \right] =0.
\end{equation}
Observe that condition (\ref{E:commut}) does not depend on the choice of a formal potential of the form $\omega$. Extracting the component of (\ref{E:commut}) corresponding to $\nu^{r-1}$ we get
\begin{equation}\label{E:express}
     \left[A_r,  \frac{\p \Phi_{-1}}{\p \bar z^l}\right] + \left[A_{r-2}, \frac{\p \Phi_0}{\p \bar z^l} + \frac{\p}{\p \bar z^l}\right] + \ldots + \left[A_1, \frac{\p \Phi_{r-2}}{\p \bar z^l}\right] =0.
\end{equation}
Using induction on $r$, we see from (\ref{E:express}) that the commutator
\begin{equation}\label{E:commrs}
                  \left[A_r,  \frac{\p \Phi_{-1}}{\p \bar z^l}\right]
\end{equation}
is expressed in terms of the forms $\omega_k$ with $-1 \leq k \leq r-2$. It was shown in \cite{CMP1} that the knowledge of the commutators (\ref{E:commrs}) for $1 \leq l \leq m$ and the condition $A_r1=0$ determine $A_r$ uniquely. It follows that the operator $A_r$, and therefore $C_r$, are expressed  in terms of the forms $\omega_k$ with $-1 \leq k \leq r-2$.
\end{proof}
Let $I = 1 + \nu I_1 + \ldots$ be the formal Berezin transform of a nondegenerate star product with separation of variables $\ast$  on $M$ with the classifying form $\omega$. According to Lemma \ref{L:conomega}, for each $r \geq 1$ the operator $I_r$  is expressed  in terms of the forms $\omega_k$ with $-1 \leq k \leq r-2$. Set $\tilde I = I^{-1}$.  The operator $\tilde I = 1 + \nu \tilde I_1 + \nu^2 \tilde I_2 + \ldots$ is the formal Berezin transform of the dual star product $\tilde\ast$. The following lemma is trivial.
\begin{lemma}\label{L:inv}
   For each $r \geq 1$, the operator $\tilde I_r$ is expressed  in terms of the forms $\omega_k$ with $-1 \leq k \leq r-2$.
\end{lemma}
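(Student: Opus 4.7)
The plan is to exploit the simple algebraic relation $I\tilde I = 1$ together with the previous result (the observation stated just before the lemma, which applies Lemma~\ref{L:conomega} to the Berezin transform) that each $I_r$ is expressed in terms of the forms $\omega_k$ with $-1 \le k \le r-2$.

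First I would extract the coefficient of $\nu^r$ from $I\tilde I = 1$. With $I_0 = \tilde I_0 = 1$ this yields the recursion
\[
    \tilde I_r = -I_r - \sum_{k=1}^{r-1} I_k \tilde I_{r-k},
\]
valid for every $r \ge 1$. This is a completely formal identity in the coefficients of the two inverse series and costs nothing to derive.

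Next I would argue by induction on $r$. For the base case $r=1$ we have $\tilde I_1 = -I_1$, and by the preceding remark $I_1$ depends only on $\omega_{-1}$, which is exactly the allowed range $-1 \le k \le r-2 = -1$. For the inductive step, assume the claim holds for all indices smaller than $r$. In the recursion above, $I_k$ for $1 \le k \le r$ depends only on $\omega_j$ with $j \le k-2 \le r-2$, while for $1 \le k \le r-1$ the inductive hypothesis gives that $\tilde I_{r-k}$ depends only on $\omega_j$ with $j \le (r-k)-2 \le r-3$. Combining these, every term on the right-hand side is expressed in the forms $\omega_j$ with $-1 \le j \le r-2$, and hence so is $\tilde I_r$.

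There is no real obstacle here: this is why the author labels the lemma ``trivial.'' The only thing to verify is that the index bounds in the recursion never exceed $r-2$, which is immediate once the inductive hypothesis and the preceding remark on the $I_k$ are written down side by side.
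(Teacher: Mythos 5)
Your proof is correct and is exactly the argument the paper has in mind; the paper simply omits it, declaring the lemma ``trivial.'' Inverting the series $I = 1 + \nu I_1 + \ldots$ term by term and combining the resulting recursion with Lemma~\ref{L:conomega} by induction, as you do, is the intended (and essentially only) route, and your index bookkeeping is accurate.
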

Assume that
\[
      \omega^{ph} = \frac{1}{\nu} \omega_{-1} + \omega^{ph}_0 + \nu \omega^{ph}_1 + \ldots
\]
is an arbitrary element of $\Omega(M)$. We want to  construct a star product with separation of variables $\ast$ on $(M,\omega_{-1})$ whose phase form is $\omega^{ph}$ and show its uniqueness. We will construct inductively its classifying form
\[
     \omega = \frac{1}{\nu} \omega_{-1} + \omega_0 + \nu \omega_1+ \ldots.
\]
It follows from formulas (\ref{E:phi}) and  (\ref{E:psi}) that
\[
       \omega_0 = \omega_0^{ph} - \frac{1}{2}\, \rho.
\] 
Let $U \subset M$ be a contractible coordinate chart. We rewrite equation~ (\ref{E:dnuphi}) on $U$ as follows,
\begin{equation}\label{E:commr}
                \frac{d\Psi}{d\nu} = \frac{m}{\nu} - \tilde I\,  \frac{d\Phi}{d\nu}.
\end{equation}
Extracting the component of (\ref{E:commr}) corresponding to $\nu^{r-1}$ for $r\geq 1$ we get the equation
\[
      r \Psi_r = - r\Phi_r - \sum_{k = 1}^{r+1} (r-k) \tilde I_k \Phi_{r-k},
\]
whence it follows that for $r \geq 1$,
\begin{equation}\label{E:omegar}
       \omega_r = \omega_r^{ph} - \frac{i}{2r} \p \bar \p \sum_{k = 1}^{r+1} (r-k) \tilde I_k \Phi_{r-k}.
\end{equation}
The sum on the right-hand side of (\ref{E:omegar}) is a global (1,1)-form on $M$  expressed  in terms of the forms $\omega_k$ with $-1 \leq k \leq r-1$. Therefore, the form $\omega$ can be inductively constructed from the phase form $\omega^{ph}$ and is uniquely defined.
We have proved the following theorem.
\begin{theorem}\label{T:existuniq}
  Given a pseudo-K\"ahler manifold $M$, for any formal form $\omega^{ph} \in \Omega(M)$ there exists a unique nondegenerate star product with separation of variables on $M$ whose phase form is $\omega^{ph}$.
\end{theorem}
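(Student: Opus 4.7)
The plan is to exploit the bijection $\omega \mapsto \ast_\omega$ from \cite{CMP1} and reduce the theorem to showing that the assignment $\omega \mapsto \omega^{ph}$ is a bijection of $\Omega(M)$ onto itself. Because $\omega_{-1}$ appears as the same coefficient in both $\omega$ and $\omega^{ph}$ by (\ref{E:phase}), and because $\omega_0$ is forced to be $\omega_0^{ph} - \rho/2$ by combining $\omega^{ph} = (\omega - \tilde\omega)/2$ with (\ref{E:omegatildeomega}) and the fact that $\varkappa$ starts at order $\nu$, the problem reduces to an induction on the order $r \geq 1$ in which I would construct $\omega_r$ from $\omega_r^{ph}$ together with previously determined data $\omega_{-1}, \ldots, \omega_{r-1}$.

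For the inductive step I would expand the defining equation (\ref{E:dnuphi}) for the potential $\Psi$ of $\tilde\omega$ in powers of $\nu$ using the reformulation (\ref{E:commr}). Extracting the coefficient of $\nu^{r-1}$ yields an explicit expression for $\Psi_r$, and after applying $i\partial\bar\partial$ I would arrive at formula (\ref{E:omegar}) which solves for $\omega_r$ in terms of $\omega_r^{ph}$ and the sum $\sum_{k=1}^{r+1}(r-k)\tilde I_k \Phi_{r-k}$. The key fact that makes this induction well-posed is Lemma \ref{L:inv}: each $\tilde I_k$ entering the sum depends only on the forms $\omega_j$ with $j \leq k - 2 \leq r - 1$, while $\Phi_{r-k}$ is a local potential of $\omega_{r-k}$ with $r - k \leq r - 1$, so the right-hand side of (\ref{E:omegar}) involves only forms already constructed at earlier stages of the induction.

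The main obstacle I anticipate is verifying that the sum $\sum_{k=1}^{r+1}(r-k)\tilde I_k \Phi_{r-k}$ assembles into a globally defined function on $M$, so that (\ref{E:omegar}) indeed defines a closed global $(1,1)$-form and the construction stays inside $\Omega(M)$. To handle this, I would invoke the observation recorded in the introduction: each local potential $\Phi_s$ is unique up to a summand $a + b$ with $a$ holomorphic and $b$ antiholomorphic, and since $\tilde I a = a$ and $\tilde I b = b$, the operators $\tilde I_k$ for $k \geq 1$ annihilate all such ambiguities. Hence $\tilde I_k \Phi_{r-k}$ is well-defined on every contractible chart and these local functions glue to a global function on $M$, whose $i\partial\bar\partial$ is the desired closed global $(1,1)$-form.

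Once (\ref{E:omegar}) is known to produce a legitimate element $\omega_r$, the inductive construction furnishes a unique $\omega \in \Omega(M)$ whose phase form agrees with $\omega^{ph}$ through every order in $\nu$. Feeding this $\omega$ into the bijection from \cite{CMP1} yields a nondegenerate star product with separation of variables $\ast_\omega$ whose phase form is $\omega^{ph}$, and the fact that each coefficient $\omega_r$ was forced at every stage of the induction gives uniqueness, proving the theorem.
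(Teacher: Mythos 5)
Your proposal is correct and follows essentially the same route as the paper: the reduction to inverting $\omega \mapsto \omega^{ph}$, the base case $\omega_0 = \omega_0^{ph} - \rho/2$, the inductive step via the $\nu^{r-1}$ coefficient of (\ref{E:commr}) leading to (\ref{E:omegar}), and the appeal to Lemma \ref{L:inv} for well-posedness all coincide with the paper's argument. Your explicit verification that $\tilde I_k \Phi_{r-k}$ is independent of the choice of local potential (since $\tilde I_k$ annihilates $a+b$ for $k \geq 1$) and hence glues to a global function is precisely the observation the paper records in the introduction and uses implicitly here, so you have filled in a detail the paper leaves to the reader.
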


It is particularly easy to construct a classifying form $\omega$ with a given phase form $\omega^{ph}$ if the phase form is an invariant formal form on a homogeneous pseudo-K\"ahler manifold. A nondegenerate star product with separation of variables on a homogeneous pseudo-K\"ahler manifold is invariant if and only if its classifying form is invariant (see \cite{MBN}). If $\ast$ is an invariant star product with separation of variables on a homogeneous pseudo-K\"ahler manifold $M$, then its canonical trace density $\mu$ is invariant and therefore the function $\varkappa$ from (\ref{E:mu}) is a formal constant. Now formula (\ref{E:omegatildeomega}) implies that
\[
                  \omega + \tilde \omega = - \rho
\]
and the corresponding phase form is
\[
            \omega^{ph} = \omega + \frac{1}{2}\, \rho. 
\]
Vice versa, if $\omega^{ph}$ is an arbitrary invariant formal form from $\Omega(M)$, then
\[
     \omega = \omega^{ph} - \frac{1}{2}\, \rho 
\] 
is an invariant classifying form of an invariant star product with separation of variables on $M$ whose corresponding phase form is $\omega^{ph}$.

{\it Example:} The complex projective space $\C P^m$ equipped with the Fubini-Study form $\omega_{FS}$ is a homogeneous K\"ahler manifold under the action of the projective unitary group $PU(m+1)$. The Ricci form of the Fubini-Study metric is $(m+1)\omega_{FS}$. 
The invariant star product with the classifying form
\[
     \omega = \frac{1}{\nu}\, \omega_{FS} - \frac{m+1}{2}\omega_{FS}
\]
has the phase form
\[
     \omega^{ph} = \frac{1}{\nu}\, \omega_{FS}.
\]

\section{Change of the formal parameter}

Let $\tau(\nu) = \tau_1 \nu + \tau_2 \nu^2 + \ldots$ be a formal series in $\nu$ with $\tau_r \in \C$ and $\tau_1 \neq 0$. The change of the formal parameter $\nu \mapsto \tau(\nu)$ defines, via a pullback, a $\C$-algebra automorphism $T = \tau^\ast$ of $C^\infty(M)[\nu^{-1},\nu]]$,
\[
      (Tf)(\nu,x) = f(\tau(\nu), x),\ x \in M.
\]
The action of $T$ extends to other formal geometric objects on $M$.
We will be particularly interested in the involutive change of the formal parameter $\nu \mapsto -\nu$.

Given a star product $\ast$ on $(M, \{\cdot, \cdot\})$, the product $\ast_T$ defined by
\[
      f \ast_T g = T( (T^{-1}f) \ast (T^{-1}g))
\]
is a $\C[\nu^{-1},\nu]]$-bilinear star product on $(M, \tau_1\{\cdot, \cdot\})$. If the product $\ast$ is given by a formal bidifferential operator $C = \sum_{r =0}^\infty \nu^r C_r$, then the product $\ast_T$ is given by 
\[
C_T = \sum_{r =0}^\infty (\tau(\nu))^r C_r.
\]
\begin{lemma}\label{L:it}
If $\ast$ is a (possibly degenerate) star product with separation of variables on a complex manifold $M$ with the formal Berezin transform $I$, then $\ast_T$ is also a star product with separation of variables whose formal Berezin transform $I_T$ is given by the formula $I_T = T I T^{-1}$. 
\end{lemma}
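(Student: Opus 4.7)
The plan is to read off both claims directly from the definition of $\ast_T$ using one elementary observation about $T$: it commutes with pointwise multiplication and fixes every $\nu$-independent function. Indeed, $(Tf)(Tg) = \sum_{r,s}\tau(\nu)^{r+s} f_r g_s = T(fg)$, and a $\nu$-independent function is sent to itself since $T$ only substitutes $\tau(\nu)$ for $\nu$ in the formal expansion. In particular, local holomorphic and antiholomorphic functions are $T$-invariant, and so are their ordinary products.

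Separation of variables for $\ast_T$ is then immediate. For a local holomorphic $a$ and arbitrary $f$,
\[
a\ast_T f \;=\; T\bigl((T^{-1}a)\ast(T^{-1}f)\bigr) \;=\; T\bigl(a\ast T^{-1}f\bigr) \;=\; T\bigl(a\cdot T^{-1}f\bigr) \;=\; a\cdot f,
\]
using $T^{-1}a = a$, the separation of variables of $\ast$, and that $T$ preserves pointwise multiplication. The antiholomorphic statement $f\ast_T b = bf$ for local antiholomorphic $b$ follows by the symmetric computation.

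To identify the formal Berezin transform I would invoke its defining property, namely that $I_T$ is the unique globally defined formal differential operator satisfying $I_T(ab) = b\ast_T a$ for all local holomorphic $a$ and antiholomorphic $b$. Since $T^{-1}$ fixes $a$, $b$ and hence $ab$,
\[
I_T(ab) \;=\; b\ast_T a \;=\; T\bigl((T^{-1}b)\ast(T^{-1}a)\bigr) \;=\; T(b\ast a) \;=\; T\,I(ab) \;=\; T\,I\,T^{-1}(ab).
\]
A formal differential operator on $M$ is determined by its action on such products (in a coordinate chart the monomials $z^\alpha\bar z^\beta$ already exhaust all partial jets at each point), so this forces the global identity $I_T = TIT^{-1}$.

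No step is really an obstacle; the lemma is essentially a packaging exercise. The single conceptual point worth pinning down is that a change of the formal parameter is transparent to the $(z,\bar z)$-splitting of functions on which separation of variables relies, so any construction that only sees the factors $a$ and $b$ carries over verbatim by $T$-conjugation. If one wanted to be extra careful, one could also check that $I_T$ has the required leading term $1$, but this is automatic since $T$ acts trivially on the $\nu^0$-coefficient.
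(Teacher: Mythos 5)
Your proof is correct and follows essentially the same route as the paper: verify separation of variables for $\ast_T$ by applying the defining property of $\ast$ to $T^{-1}a$ and $T^{-1}f$, then compute $I_T(ab)=b\ast_T a=TIT^{-1}(ab)$ and conclude by uniqueness of the formal Berezin transform. The only cosmetic difference is that you use $T^{-1}a=a$ for $\nu$-independent holomorphic $a$, whereas the paper only needs that $T^{-1}a$ is again holomorphic; both observations do the same work.
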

\begin{proof}
   If $a$ is a local holomorphic function on $M$, then so is $T^{-1}a$. We have
\[
    a \ast_T f =  T( (T^{-1}a) \ast (T^{-1}f)) =  T( (T^{-1}a)(T^{-1}f)) = af.
\] 
Similarly, for a local antiholomorphic function $b$ we have $f \ast_T b = bf$. Therefore, $\ast_T$ is a star product with separation of variables. Now,
\begin{eqnarray*}
      I_T(ab) = b \ast_T a = T( (T^{-1}b) \ast (T^{-1}a)) =\\
 TI((T^{-1}a) (T^{-1}b)) = TIT^{-1}(ab),
\end{eqnarray*}
hence $I_T = TIT^{-1}$.
\end{proof}

In the rest of the paper we will assume that $\ast$ is a nondegenerate star product with separation of variables on a pseudo-K\"ahler manifold $(M, \omega_{-1})$ of complex dimension $m$ with the classifying form
\[
             \omega = \frac{1}{\nu}\omega_{-1} + \omega_0 + \ldots \in \Omega(M).
\]
We fix a change of the formal variable $T = \tau^\ast$. The product $\ast_T$ is a star product with separation of variables on the pseudo-K\"ahler manifold $(M, (1/\tau_1)\omega_{-1})$.
\begin{lemma}\label{L:omega}
 The classifying form of the star product with separation of variables $\ast_T$ is $T\omega$.
\end{lemma}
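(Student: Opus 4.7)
The plan is to reduce the statement to the characterization of $\ast_\omega$ in terms of local potentials of $\omega$ that was recalled in Section 2: namely, on a contractible chart $U$ with a formal potential $\Phi$ of $\omega$, the star product $\ast_\omega$ is the unique nondegenerate star product with separation of variables for which $L_{\p\Phi/\p z^k} = \p\Phi/\p z^k + \p/\p z^k$ and $R_{\p\Phi/\p\bar z^l} = \p\Phi/\p\bar z^l + \p/\p\bar z^l$.

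First I would check that $T\omega$ really lies in $\Omega(M)$ for the pseudo-K\"ahler form $(1/\tau_1)\omega_{-1}$ on $M$. Writing $\omega = \frac{1}{\nu}\omega_{-1} + \omega_0 + \nu\omega_1 + \ldots$, one has $T\omega = \frac{1}{\tau(\nu)}\omega_{-1} + \omega_0 + \tau(\nu)\omega_1 + \ldots$, whose principal part is $\frac{1}{\tau_1\nu}\omega_{-1}$, so $T\omega$ has the required shape with nondegenerate leading term $(1/\tau_1)\omega_{-1}$, matching the Poisson structure of $\ast_T$ noted in Lemma~\ref{L:it} and the preceding remarks.

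Next, if $\Phi$ is a formal potential of $\omega$ on $U$, then $T\Phi$ is a formal potential of $T\omega$, because $T$ acts only on the formal parameter $\nu$ and hence commutes with $\p$ and $\bar\p$; applying $T$ to $\omega = i\p\bar\p\Phi$ yields $T\omega = i\p\bar\p\, T\Phi$. Denoting by $L^T$ and $R^T$ the left and right multiplication operators for $\ast_T$, the definition $f\ast_T g = T((T^{-1}f)\ast(T^{-1}g))$ gives $L^T_f = T L_{T^{-1}f} T^{-1}$. Taking $f = \p(T\Phi)/\p z^k = T(\p\Phi/\p z^k)$ and using $T^{-1}f = \p\Phi/\p z^k$ together with the fact that $T$ is a $\C$-algebra automorphism commuting with $\p/\p z^k$, one finds
\[
L^T_{\p(T\Phi)/\p z^k} \;=\; T\bigl(\tfrac{\p\Phi}{\p z^k} + \tfrac{\p}{\p z^k}\bigr)T^{-1} \;=\; \tfrac{\p(T\Phi)}{\p z^k} + \tfrac{\p}{\p z^k}.
\]
The analogous identity for $R^T_{\p(T\Phi)/\p\bar z^l}$ follows by the same computation with $\bar z^l$ in place of $z^k$.

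Finally, Lemma~\ref{L:it} ensures $\ast_T$ is itself a star product with separation of variables, and it is nondegenerate since its Poisson structure is a nonzero multiple of the original one. Therefore the characterization recalled above identifies $\ast_T$ uniquely with $\ast_{T\omega}$, so the classifying form of $\ast_T$ is $T\omega$. The only thing that requires care is the verification that $T$ commutes with holomorphic and antiholomorphic partial derivatives and preserves the algebra structure on $C^\infty(M)[\nu^{-1},\nu]]$, which is immediate because $T$ is induced by a substitution in the formal parameter alone; no further obstacle arises.
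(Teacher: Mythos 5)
Your proposal is correct and follows essentially the same route as the paper: the paper likewise verifies that $T\Phi$ is a formal potential of $T\omega$ and computes $\frac{\p (T\Phi)}{\p z^k} \ast_T f = T\bigl(\frac{\p \Phi}{\p z^k} \ast T^{-1}f\bigr) = \frac{\p (T\Phi)}{\p z^k} f + \frac{\p f}{\p z^k}$, which is exactly your identity $L^T_{\p(T\Phi)/\p z^k} = T L_{\p\Phi/\p z^k} T^{-1}$. Your additional checks (that $T\omega \in \Omega(M)$ and the analogous statement for the right multiplication operators) are sensible but left implicit in the paper.
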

\begin{proof}
Let $\Phi$ be a formal potential of $\omega$ on a contractible coordinate chart $U \subset M$. Given a function $f$ on $U$, we have
\begin{eqnarray*}
\frac{\p (T\Phi)}{\p z^k} \ast_T f =
T\left(\frac{\p \Phi}{\p z^k}\right) \ast_T f = T\left(\frac{\p \Phi}{\p z^k} \ast T^{-1}f\right) = \\
 T\left(\frac{\p \Phi}{\p z^k} T^{-1}f + \frac{\p}{\p z^k} T^{-1}f \right) = \frac{\p (T\Phi)}{\p z^k}  f + \frac{\p f}{\p z^k}.
\end{eqnarray*}
It follows that $T\Phi$ is a formal potential which determines the star product with separation of variables $\ast_T$ on $U$. Therefore, the classifying form of $\ast_T$ is $T\omega$.
\end{proof}
Let $I$ be the formal Berezin transform and $\mu$ be the canonical trace density for the product $\ast$,  and let $\tilde\omega$ be the dual of the form $\omega$. 
The canonical trace density of the star product $\ast_T$ is given by the formula
\begin{equation}\label{E:mut}
      \mu_T = \frac{1}{m!}\left(\frac{1}{\tau_1 \nu}\omega_{-1}\right)^m e^{\varkappa_T},
\end{equation}
where $\varkappa_T$ is a globally defined formal function on $M$ which vanishes at $\nu =0$. The expression
\[
          \log\frac{\tau(\nu)}{\tau_1\nu} = \frac{\tau_2}{\tau_1}\nu + \ldots
\]
gives a well defined formal series which also vanishes at $\nu =0$.
\begin{proposition}\label{P:tildeomega}
\begin{enumerate}[(a)]
 \item The dual form of the form $T\omega$ is the form $T\tilde\omega$. 
\item The following formula holds,
\[
     \varkappa_T = T\varkappa - m \log \frac{\tau(\nu)}{\tau_1 \nu}.
\]
\item The canonical trace density for the product $\ast_T$ is
\begin{equation}\label{E:tmu}
          \mu_T = T\mu = \frac{1}{m!}\left(\frac{1}{\tau(\nu)}\, \omega_{-1}\right)^m e^{T\varkappa}. 
\end{equation}
\end{enumerate}
\end{proposition}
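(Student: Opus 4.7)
The plan is to prove (a), (b), (c) in order, with (c) reducing to substitution of (b) into (\ref{E:mut}).

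For (a), I combine Lemma \ref{L:it} with the fact that a star product with separation of variables is determined by its formal Berezin transform. From Lemma \ref{L:it}, $I_T = TIT^{-1}$, hence $(I_T)^{-1} = T\tilde IT^{-1}$. Applying Lemma \ref{L:it} to $\tilde\ast$, the Berezin transform of $(\tilde\ast)_T$ is $T\tilde IT^{-1}$ as well. Therefore $(\tilde\ast)_T$ equals the dual of $\ast_T$, and by Lemma \ref{L:omega} its classifying form is $T\tilde\omega$, so $T\tilde\omega$ is the dual of $T\omega$.

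For (b), I work on a contractible chart $U \subset M$ and choose a formal potential $\Phi$ of $\omega$ with the associated $\Psi$ from (\ref{E:psi})--(\ref{E:dnuphi}). By Lemma \ref{L:omega}, $\Phi_T := T\Phi$ is a potential of $T\omega$. The analog of (\ref{E:dnuphi}) for $\ast_T$ on $(M,(1/\tau_1)\omega_{-1})$ reads $d\Phi_T/d\nu + I_T\, d\Psi_T/d\nu = m/\nu$ with $I_T = TIT^{-1}$, and the metric determinant $\mathbf{g}_T = \mathbf{g}/\tau_1^m$ gives $\log\mathbf{g}_T = \log\mathbf{g} - m\log\tau_1$. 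I claim the unique $\Psi_T$ prescribed by (\ref{E:psi})--(\ref{E:dnuphi}) for $\ast_T$ is
\[
   \Psi_T = T\Psi - m\log\frac{\tau(\nu)}{\nu}.
\]
To check (\ref{E:dnuphi}), I use $d(T\Phi)/d\nu = \tau'(\nu)\,T(d\Phi/d\nu)$ together with $I_T\circ T = T\circ I$ and the $\C[\nu^{-1},\nu]]$-linearity of $I$ in $\nu$; this reduces the equation to the scalar identity $\tau'(\nu)\cdot m/\tau(\nu) + f'(\nu) = m/\nu$ with $f(\nu) = -m\log(\tau(\nu)/\nu)$, which is immediate. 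The integration constant is pinned down by matching the $\nu^0$ coefficient of $\Psi_T$ against $-\Phi_{0,T} + \log\mathbf{g}_T$; the shift $-m\log\tau_1$ in $\log\mathbf{g}_T$ is exactly absorbed by the $\log\tau_1$ inside $\log(\tau(\nu)/\nu)$. Substituting into (\ref{E:kappa}) for $\ast_T$ then yields
\[
   \varkappa_T = T\Phi + T\Psi - m\log\frac{\tau(\nu)}{\nu} - \log\mathbf{g} + m\log\tau_1 = T\varkappa - m\log\frac{\tau(\nu)}{\tau_1\nu},
\]
which is (b).

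Part (c) follows by inserting the formula from (b) into (\ref{E:mut}) and pulling the factor $(\tau_1\nu/\tau(\nu))^m = \exp(-m\log(\tau(\nu)/(\tau_1\nu)))$ through the prefactor; the result is $(1/m!)(\omega_{-1}/\tau(\nu))^m e^{T\varkappa}$, which is manifestly $T\mu$ since $T$ acts only on $\nu$ and $\omega_{-1}$ is $\nu$-independent. The main obstacle is the bookkeeping in (b): verifying that the candidate $\Psi_T$ simultaneously satisfies the transformed equation (\ref{E:dnuphi}) with $I_T$ and the correct normalization (\ref{E:psi}) with the shifted $\log\mathbf{g}_T$. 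Once both conditions are checked, globalization of $\varkappa_T$ is automatic by the same chart-independence argument used for $\varkappa$ itself.
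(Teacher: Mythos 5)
Your proof is correct. Parts (b) and (c) follow essentially the same route as the paper: you exhibit the candidate potential $\Xi = T\Psi - m\log(\tau(\nu)/\nu)$ of the dual of $T\omega$ and verify both the normalization condition (\ref{E:psi}) (with $\log\mathbf{g}$ shifted by $-m\log\tau_1$) and the transformed equation (\ref{E:dnuphi}), whereas the paper derives the same identity (\ref{E:xiviatpsi}) by applying $T$ to (\ref{E:dnuphi}), comparing the resulting equation (\ref{E:logfrac}) with the defining equation (\ref{E:itxi}) for $\Xi$, and matching the terms of order $\nu^{-1}$ and $\nu^{0}$ to fix the constant of integration; the two computations are the same up to whether one guesses and checks or derives and compares. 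The genuine divergence is in part (a): the paper reads it off from the formula $\Xi = T\Psi + \mathrm{const}$, so that $i\p\bar\p\,\Xi = T\tilde\omega$, while you argue structurally that $(I_T)^{-1} = T\tilde I T^{-1}$ is the formal Berezin transform of $(\tilde\ast)_T$, that a star product with separation of variables is determined by its Berezin transform, and that Lemma \ref{L:omega} applied to $\tilde\ast$ gives $T\tilde\omega$ as the classifying form of $(\tilde\ast)_T$. Your argument for (a) is independent of the trace-density machinery and would work even before $\Xi$ is constructed, at the modest cost of invoking Lemma \ref{L:omega} for the dual product on the opposite Poisson structure (its proof carries over verbatim); the paper's version gets (a) for free from the computation it needs for (b) anyway. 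Both are sound.
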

\begin{proof}
Let $U \subset M$ be a contractible coordinate chart. Denote, as above, the metric tensor of $\omega_{-1}$ by $g_{k\bar l}$ and set
$\mathbf{g} = \det (g_{k\bar l})$. The metric tensor for $(1/\tau_1)\omega_{-1}$ is $(1/\tau_1)g_{k\bar l}$ and 
\[
    \det \left(\frac{1}{\tau_1}g_{k\bar l}\right) = \left(\frac{1}{\tau_1}\right)^m \mathbf{g}.
\]
Fix a branch of $\log \mathbf{g}$ on $U$ and a value of $\log \tau_1$, and set
\[
              \log \left(\frac{1}{\tau_1}\right)^m \mathbf{g} = \log \mathbf{g} - m \log \tau_1.
\]
Choose an arbitrary potential $\Phi$ of the form $\omega$ on $U$. Then, according to Lemma \ref{L:omega}, $T\Phi$ is a potential of the form $\omega_T$. We have
\[
      T\Phi = \frac{1}{\tau_1 \nu}\Phi_{-1} + \left(-\frac{\tau_2}{\tau_1^2}\Phi_{-1}+\Phi_0\right) \pmod{\nu}.
\]
 There exists a unique potential $\Xi$ of the form dual to $T\omega$ such that
\begin{equation}\label{E:longxi}
    \Xi = - \frac{1}{\tau_1\nu}\Phi_{-1} + \left(\frac{\tau_2}{\tau_1^2}\Phi_{-1}-\Phi_0 + \log \mathbf{g} - m \log \tau_1\right) \pmod{\nu},
\end{equation}
and which satisfies the equation
\begin{equation}\label{E:itxi}
          \frac{d}{d\nu}(T\Phi)  +I_T \frac{d}{d\nu}\Xi = \frac{m}{\nu}.
\end{equation}
The function $\varkappa_T$ in formula (\ref{E:mut}) has the following local expression,
\begin{equation}\label{E:kappatau}
      \varkappa_T = T\Phi + \Xi -  \log \mathbf{g} + m \log \tau_1.
\end{equation}
Applying $T$ to (\ref{E:dnuphi}) and using the fact that
\[
       T \frac{d}{d\nu}T^{-1} = \frac{1}{\tau'(\nu)}\frac{d}{d\nu},
\]
we obtain that
\begin{equation}\label{E:overtau}
     \frac{1}{\tau'(\nu)}\frac{d (T\Phi)}{d\nu} + \frac{1}{\tau'(\nu)} I_T \frac{d (T\Psi)}{d\nu} = \frac{m}{\tau(\nu)}.
\end{equation}
Equation (\ref{E:overtau}) is equivalent to the following one,
\begin{equation}\label{E:logfrac}
     \frac{d}{d\nu} (T\Phi) + I_T \frac{d}{d\nu}\left(T\Psi - m\log\frac{\tau(\nu)}{\tau_1\nu}\right) = \frac{m}{\nu}.
\end{equation}
We get from equations (\ref{E:itxi}) and (\ref{E:logfrac}) that
\begin{equation}\label{E:xitpsi}
                      \frac{d}{d\nu}\Xi = \frac{d}{d\nu}\left(T\Psi - m\log\frac{\tau(\nu)}{\tau_1\nu}\right).
\end{equation}
Equation (\ref{E:psi}) implies that
\begin{equation}\label{E:longtpsi}
      T\Psi = - \frac{1}{\tau_1\nu}\Phi_{-1} + \left(\frac{\tau_2}{\tau_1^2}\Phi_{-1} - \Phi_0 + \log \mathbf{g}\right) \pmod{\nu}.
\end{equation}
We obtain from Eqns. (\ref{E:longxi}, \ref{E:xitpsi}, \ref{E:longtpsi}) that
\begin{equation}\label{E:xiviatpsi}
     \Xi = T\Psi - m \log \tau_1 - m\log\frac{\tau(\nu)}{\tau_1\nu}.
\end{equation}
Statement $(a)$ of  the Proposition follows directly from formula (\ref{E:xiviatpsi}). We obtain statement $(b)$ combining the formula
\[
     T\varkappa = T\Phi + T\Psi - \log \mathbf{g}
\]
with (\ref{E:kappatau}) and (\ref{E:xiviatpsi}). Statement $(b)$ readily implies $(c)$.
\end{proof}

Let $\omega^{ph}$ denote the phase form of the product $\ast$.
\begin{corollary}
   The phase form of the star product $\ast_T$ is $T\omega^{ph}$.
\end{corollary}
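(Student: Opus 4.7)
The plan is to combine the definition of the phase form with the two results established immediately before the corollary, namely Lemma \ref{L:omega} and part (a) of Proposition \ref{P:tildeomega}. Since $T$ is a $\C$-algebra automorphism of $C^\infty(M)[\nu^{-1},\nu]]$ and its action extends coefficient-wise to formal differential forms, $T$ acts $\C$-linearly on $\Omega(M)$. Hence $T$ commutes with the operation of taking half the difference of two formal forms.

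Explicitly, I would proceed in three short steps. First, invoke Lemma \ref{L:omega} to identify the classifying form of $\ast_T$ as $T\omega$. Second, invoke Proposition \ref{P:tildeomega}(a) to identify the dual of $T\omega$ as $T\tilde\omega$. Third, apply the definition (\ref{E:phase}) of the phase form to $\ast_T$: its phase form equals
\[
\tfrac{1}{2}\bigl(T\omega - T\tilde\omega\bigr) = T\bigl(\tfrac{1}{2}(\omega - \tilde\omega)\bigr) = T\omega^{ph}.
\]

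There is no real obstacle here; the work is already done in Lemma \ref{L:omega} and Proposition \ref{P:tildeomega}(a), and the corollary is just the observation that the phase form is defined by a $\C$-linear operation on the pair $(\omega,\tilde\omega)$ which is therefore intertwined by $T$. The only point one might want to verify in passing is that the leading term of $T\omega^{ph}$ is indeed $\frac{1}{\tau(\nu)}\omega_{-1}$, matching the fact that $\ast_T$ lives on the pseudo-K\"ahler manifold $(M,(1/\tau_1)\omega_{-1})$; this is automatic from $T(\nu^{-1}\omega_{-1}) = (\tau(\nu))^{-1}\omega_{-1}$.
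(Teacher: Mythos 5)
Your proof is correct and follows exactly the route the paper intends: the corollary is stated without proof immediately after Proposition \ref{P:tildeomega} precisely because it is the combination of Lemma \ref{L:omega} (classifying form of $\ast_T$ is $T\omega$), Proposition \ref{P:tildeomega}(a) (dual of $T\omega$ is $T\tilde\omega$), and the $\C$-linearity of $T$ applied to the definition (\ref{E:phase}). Nothing is missing.
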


Now assume that $\tau(\nu) = \tau_1\nu  + \tau_2 \nu^2 + \ldots$ is a proper involution, i.e., $\tau \circ \tau = id$ and $\tau_1 = -1$, for example, $\tau(\nu) = - \nu$. For any $\omega \in \Omega(M)$, its odd part with respect to the proper involution $T = \tau^\ast$,
\[
        \frac{1}{2}(\omega - T\omega),
\]
also lies in $\Omega(M)$. 

\begin{theorem}\label{T:equiv}
   Let $\ast$ be a nondegenerate star product with separation of variables on a pseudo-K\"ahler manifold $M$ with the classifying form $\omega \in \Omega(M)$, dual form $\tilde\omega$, phase form $\omega^{ph}$, and formal Berezin transform $I$, and let $T=\tau^\ast$ be a proper involution. Then the following conditions are equivalent:
\begin{enumerate}[(i)]
\item $T\omega^{ph} = - \omega^{ph}$;
\item $T\omega = \tilde \omega$;
\item $I_T = I^{-1}$.
\end{enumerate}
\end{theorem}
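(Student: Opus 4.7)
The plan is to prove the equivalence through the chain $(i) \Leftrightarrow (ii) \Leftrightarrow (iii)$, leveraging the uniqueness results already on hand: Theorem \ref{T:existuniq} (phase forms bijectively parametrize star products with separation of variables), the bijection $\omega \mapsto \ast_\omega$ of Section 2, and the fact recorded in the introduction that a star product with separation of variables is determined by its formal Berezin transform. The essential preliminary observation is that since $\tau_1 = -1$, both $T\omega$ and $\tilde\omega$ have leading term $-\frac{1}{\nu}\omega_{-1}$, so both belong to $\Omega(M)$ viewed as the parameter space of star products with separation of variables on the pseudo-K\"ahler manifold $(M,-\omega_{-1})$. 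Indeed, $\tilde\ast$ lives on the opposite Poisson manifold, and by Lemma \ref{L:it} together with Lemma \ref{L:omega} so does $\ast_T$, so all comparisons below take place on a single pseudo-K\"ahler manifold.

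For $(i) \Leftrightarrow (ii)$: the direction $(ii) \Rightarrow (i)$ is a one-line computation — apply $T$ to $\omega^{ph} = \frac{1}{2}(\omega - \tilde\omega)$ and use $T^2 = \mathrm{id}$ to obtain $T\omega^{ph} = \frac{1}{2}(\tilde\omega - \omega) = -\omega^{ph}$. For the converse, by Lemma \ref{L:omega} and the Corollary preceding Theorem \ref{T:equiv}, the star product $\ast_T$ has classifying form $T\omega$ and phase form $T\omega^{ph}$, while $\tilde\ast$ has classifying form $\tilde\omega$ and phase form $-\omega^{ph}$ (as noted in Section 2). If $T\omega^{ph} = -\omega^{ph}$, then $\ast_T$ and $\tilde\ast$ are two star products with separation of variables on $(M,-\omega_{-1})$ sharing the same phase form, so by the uniqueness part of Theorem \ref{T:existuniq} applied to $(M,-\omega_{-1})$ they must coincide. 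Their classifying forms then agree: $T\omega = \tilde\omega$.

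For $(ii) \Leftrightarrow (iii)$: Lemma \ref{L:it} gives $I_T = TIT^{-1}$, while by definition the formal Berezin transform of $\tilde\ast$ is $I^{-1}$. The bijection between classifying forms and star products with separation of variables (Section 2), together with the fact that such a star product is determined by its formal Berezin transform, yields the chain $T\omega = \tilde\omega \Leftrightarrow \ast_T = \tilde\ast \Leftrightarrow I_T = I^{-1}$, which completes the argument. The only potential obstacle is conceptual rather than technical: one must confirm that the uniqueness statements from Section 2 and Theorem \ref{T:existuniq}, stated for an arbitrary pseudo-K\"ahler manifold, apply uniformly on the sign-flipped manifold $(M,-\omega_{-1})$. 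Since $-\omega_{-1}$ is again a bona fide pseudo-K\"ahler form, no extra work is needed and all preceding results apply verbatim.
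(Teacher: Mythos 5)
Your proposal is correct and follows essentially the same route as the paper: the key implication $(i)\Rightarrow(ii)$ is obtained exactly as in the text, by noting that $\ast_T$ and $\tilde\ast$ have phase forms $T\omega^{ph}$ and $-\omega^{ph}$ respectively and invoking the uniqueness part of Theorem~\ref{T:existuniq}, while the remaining implications are the "straightforward" ones the paper leaves to the reader and which you fill in correctly via linearity of $T$ and the bijections between classifying forms, star products, and formal Berezin transforms.
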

\begin{proof}
$(i)\Rightarrow (ii)$.  Assume that $\omega^{ph}$ is odd with respect to $T$, i.e., $T\omega^{ph} = - \omega^{ph}$.  The phase form of the form $T\omega$ is $T\omega^{ph} = - \omega^{ph}$. The phase form of the dual form $\tilde\omega$ is also $-\omega^{ph}$. By Theorem \ref{T:existuniq}, $T\omega = \tilde\omega$. The implications $(ii)\Rightarrow (i)$ and $ (ii) \Leftrightarrow (iii)$ are straightforward.
\end{proof}

Assume that $T = \tau^\ast$ is a proper involution and $\ast$ is a star product satisfying conditions $(i) - (iii)$ of Theorem \ref{T:equiv}.
\begin{proposition}\label{P:kappat}
 The canonical trace density $\mu_T$ of the star product $\ast_T$ is expressed through the canonical trace density $\mu$ of the product $\ast$ as follows,
\[
              \mu_T =   (-1)^m \mu. 
\]
\end{proposition}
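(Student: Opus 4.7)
The plan is to identify the transported star product $\ast_T$ with the dual star product $\tilde\ast$ and then to compute $\tilde\mu$ directly from its defining formula.

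First I would observe, via Lemma \ref{L:omega}, that the classifying form of $\ast_T$ is $T\omega$; by hypothesis (condition $(ii)$ of Theorem \ref{T:equiv}), $T\omega = \tilde\omega$; and since $\tau_1 = -1$, the product $\ast_T$ is a star product with separation of variables on the same pseudo-K\"ahler manifold $(M,-\omega_{-1})$ as $\tilde\ast$. The bijection between such products and their classifying forms therefore forces $\ast_T = \tilde\ast$, and in particular $\mu_T = \tilde\mu$.

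Next I would apply the trace-density formula (\ref{E:mu}) to $\tilde\ast$ on $(M,-\omega_{-1})$:
\[
\tilde\mu = \frac{1}{m!}\left(\frac{-\omega_{-1}}{\nu}\right)^m e^{\tilde\varkappa} = (-1)^m\,\frac{1}{m!}\left(\frac{\omega_{-1}}{\nu}\right)^m e^{\tilde\varkappa},
\]
so the claim reduces to the identity $\tilde\varkappa = \varkappa$. To establish this, I would pick a local potential $\Phi$ of $\omega$ on a contractible chart together with its companion $\Psi$ singled out by (\ref{E:psi})--(\ref{E:dnuphi}), and verify that the swapped pair $\tilde\Phi := \Psi$, $\tilde\Psi := \Phi + c$ (for a constant $c$ determined below) serves as the analogous pair for $\tilde\ast$. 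The normalization (\ref{E:dnuphi}) for $\tilde\ast$ uses $\tilde I = I^{-1}$ and collapses to the already-established identity (\ref{E:commr}), forcing $dc/d\nu = 0$, so $c$ is a genuine constant. Substituting into (\ref{E:kappa}) gives $\tilde\varkappa = \Psi + \Phi + c - \log\tilde{\mathbf{g}}$, which equals $\varkappa = \Psi + \Phi - \log\mathbf{g}$ exactly when $c = \log\tilde{\mathbf{g}} - \log\mathbf{g}$.

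The one delicate point is the log-branch bookkeeping: since $\tilde{\mathbf{g}} = \det(-g_{k\bar l}) = (-1)^m \mathbf{g}$, the difference $\log\tilde{\mathbf{g}} - \log\mathbf{g}$ is the formal constant $m\log(-1)$, and I would check that the leading-terms condition (\ref{E:psi}) applied to $\tilde\ast$ forces $\tilde\Psi$ to differ from $\Phi$ by exactly this constant. With that check in hand, the two steps combine to yield $\mu_T = \tilde\mu = (-1)^m \mu$.
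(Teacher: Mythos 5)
Your proof is correct, but it takes a genuinely different route from the paper's. The paper never identifies $\ast_T$ with the dual product $\tilde\ast$; instead it proves the identity $T\varkappa - \varkappa = m\log(\tau(\nu)/\tau_1\nu)$ directly, by applying $T$ to the normalization equation (\ref{E:dnuphi}), subtracting (\ref{E:dnuphi}) from the result, and using the fact that $T\Phi$ and $\Psi$ are both potentials of $\tilde\omega$ (so that $I-1$ kills their difference) to reduce everything to an equation $\frac{d}{d\nu}(\cdots)=0$ with data vanishing at $\nu=0$; combined with Proposition \ref{P:tildeomega}(b) this yields $\varkappa_T=\varkappa$. You instead observe that condition $(iii)$ of Theorem \ref{T:equiv} (or Lemma \ref{L:omega} together with condition $(ii)$ and the classification bijection) forces $\ast_T=\tilde\ast$, and then compute the canonical trace density of the dual product from scratch by checking that its normalized potential pair is the swap $(\Psi,\Phi+m\log(-1))$, the constant $m\log(-1)=\log\tilde{\mathbf{g}}-\log\mathbf{g}$ cancelling in (\ref{E:kappa}) to give $\tilde\varkappa=\varkappa$. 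Your verification of the swapped pair is sound: applying $I$ to the normalization equation for $\tilde\ast$ (with $\tilde I=I^{-1}$) and comparing with (\ref{E:dnuphi}) pins down $d\tilde\Psi/d\nu=d\Phi/d\nu$, and the $\nu^0$ term is then fixed by the analogue of (\ref{E:psi}). Your route buys a stronger statement, namely $\tilde\mu=(-1)^m\mu$ for \emph{every} nondegenerate star product with separation of variables, with no involution in sight; the paper's route produces formula (\ref{E:tkappaminus}) explicitly, which is what the subsequent Corollary (evenness of $\varkappa$ in $\nu$) is quoted from --- that Corollary still follows from your argument, but only after passing back through Proposition \ref{P:tildeomega}(b).
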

\begin{proof}
Formula (\ref{E:mut}) and the fact that $\tau_1 = -1$ imply that the statement of the Proposition is equivalent to the statement
\begin{equation}\label{E:ident}
            \varkappa_T = \varkappa.
\end{equation}
We will prove the formula
\begin{equation}\label{E:tkappaminus}
       T\varkappa - \varkappa = m \log \frac{\tau(\nu)}{\tau_1 \nu}
\end{equation}
which, according to part $(ii)$ of Proposition \ref{P:tildeomega}, is equivalent to (\ref{E:ident}).
Assume that $\omega \in \Omega(M)$ is such that $T\omega = \tilde\omega$. Let $\Phi$ and $\Psi$  be formal potentials of $\omega$ and $\tilde\omega$ on a contractible coordinate chart $U \subset M$ given by formulas (\ref{E:phi}) and (\ref{E:psi}), respectively. Formula (\ref{E:kappa}) implies that
\begin{equation}\label{E:tphitpsi}
       T\varkappa - \varkappa = T\Phi + T\Psi - \Phi - \Psi.
\end{equation}
Since $I_T = I^{-1}$ by Theorem \ref{T:equiv}, equation  (\ref{E:logfrac}) can be rewritten as follows,
\begin{equation}\label{E:invlogfrac}
     I\frac{d (T\Phi)}{d\nu} + \frac{d}{d\nu}\left(T\Psi - m\log\frac{\tau(\nu)}{\tau_1\nu}\right) = \frac{m}{\nu}.
\end{equation}
Subtracting equation (\ref{E:dnuphi}) from (\ref{E:invlogfrac}) we obtain that
\begin{equation}\label{E:subtr}
  I \frac{d}{d\nu}(T\Phi - \Psi) + \frac{d}{d\nu}\left(T\Psi - \Phi - m\log\frac{\tau(\nu)}{\tau_1\nu}\right) = 0.
\end{equation}
 Lemma \ref{L:omega} implies that $T\Phi - \Psi = a + b$, where $a$ and $b$ are a formal holomorphic and antiholomorphic functions on $U$, respectively, whence
\[
      I \frac{d}{d\nu}(T\Phi - \Psi) =  \frac{d}{d\nu}(T\Phi - \Psi).
\]
We get from (\ref{E:subtr}) that
\begin{equation}\label{E:almost}
      \frac{d}{d\nu}\left(T\Phi - \Psi + T\Psi - \Phi - m\log\frac{\tau(\nu)}{\tau_1\nu}\right) =0.
\end{equation}
We see from equations (\ref{E:tphitpsi}) and (\ref{E:almost}) that
\begin{equation}\label{E:follows}
          \frac{d}{d\nu}\left(T\varkappa - \varkappa - m\log\frac{\tau(\nu)}{\tau_1\nu}\right) =0.
\end{equation}
Formula (\ref{E:tkappaminus}) follows from (\ref{E:follows}) and the fact that the formal series $T\varkappa, \varkappa$, and $\log(\tau(\nu)/\tau_1\nu)$  vanish at $\nu =0$.  
\end{proof}

\begin{corollary}
   If $\tau(\nu) = -\nu$, then the function $\varkappa$ is even in the formal parameter $\nu$.
\end{corollary}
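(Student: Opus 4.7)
The plan is to reduce the corollary directly to the key identity (\ref{E:tkappaminus}) established inside the proof of Proposition \ref{P:kappat}. First I would verify that $\tau(\nu) = -\nu$ fits the standing hypotheses of this block of the paper: plainly $\tau_1 = -1$ and $\tau \circ \tau = \mathrm{id}$, so $\tau^\ast$ is a proper involution, and the star product $\ast$ is assumed (since the corollary sits under the same assumptions as Proposition \ref{P:kappat}) to satisfy conditions $(i)$–$(iii)$ of Theorem \ref{T:equiv}. Hence Proposition \ref{P:kappat} applies, and in particular the identity
\[
T\varkappa - \varkappa = m \log \frac{\tau(\nu)}{\tau_1 \nu}
\]
is available.

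Next I would specialize this identity to $\tau(\nu) = -\nu$. Since $\tau(\nu)/(\tau_1 \nu) = (-\nu)/((-1)\nu) = 1$, the right-hand side vanishes and we obtain $T\varkappa = \varkappa$. The final step is to unwind the definition of $T = \tau^\ast$ from the start of Section 4: $(T\varkappa)(\nu,x) = \varkappa(-\nu,x)$, so $T\varkappa = \varkappa$ is exactly the statement $\varkappa(-\nu,x) = \varkappa(\nu,x)$, i.e.\ $\varkappa$ involves only even powers of $\nu$.

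There is essentially no obstacle; the whole content of the corollary is that the correction term $m\log(\tau(\nu)/(\tau_1\nu))$, which measures the failure of $T\varkappa$ to equal $\varkappa$ in Proposition \ref{P:kappat}, collapses to zero for the distinguished involution $\nu \mapsto -\nu$. If one wanted a self-contained argument without invoking Proposition \ref{P:kappat}, one could equivalently re-run its proof from equation (\ref{E:subtr}) with $\tau(\nu) = -\nu$: the logarithmic term never appears, the pluriharmonic vanishing $T\Phi - \Psi = a + b$ together with $Ia = a$, $Ib = b$ gives $\frac{d}{d\nu}(T\varkappa - \varkappa) = 0$, and the fact that both $T\varkappa$ and $\varkappa$ vanish at $\nu = 0$ forces $T\varkappa = \varkappa$, hence parity of $\varkappa$.
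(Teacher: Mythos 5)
Your proposal is correct and follows exactly the paper's own route: the corollary is deduced as an immediate consequence of formula (\ref{E:tkappaminus}), whose right-hand side $m\log(\tau(\nu)/\tau_1\nu)$ vanishes for $\tau(\nu)=-\nu$, giving $T\varkappa=\varkappa$. You also correctly observe that the corollary inherits the standing hypothesis that $\ast$ satisfies conditions $(i)$--$(iii)$ of Theorem \ref{T:equiv}, which the paper leaves implicit.
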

\begin{proof}
The Corollary is an immediate consequence of formula  (\ref{E:tkappaminus}).
\end{proof}

Given the formal Berezin transform $I$ corresponding to a classifying form $\omega = (1/\nu)\omega_{-1} + \ldots \in \Omega(M)$, the formal differential operator $X := \log I = \nu X_1 + \nu^2 X_2 + \ldots$ is a well defined global operator on $M$ with $X_1 = \Delta$  the Laplace-Beltrami operator for the pseudo-K\"ahler metric $\omega_{-1}$. Condition $(iii)$ of Theorem ~\ref{T:equiv} is equivalent to the condition that $X$ is odd with respect to $T$, i.e., $TXT^{-1} = - X$. In \cite{CMP3} it was noticed that, for each $r \geq 1$, the order of the operators $X_{2r-1}$ and $X_{2r}$ is not greater than $2r$. This observation leads to the question whether the order of the operators $X_{2r}$ can be lowered further. It turns out that all operators $X_{2r}$ can simultaneously vanish. Namely, if the involution is $\tau(\nu) = -\nu$, one can see from Theorem \ref{T:equiv} that all operators $X_{2r}$ vanish if and only if the phase form $\omega^{ph}$ of the form $\omega$ is odd in the formal parameter $\nu$.

\end{document}